\theoremstyle{plain}
\newtheorem{theorem}{Theorem}
\numberwithin{equation}{section}
\newcommand{\ra}{\rightarrow}
\begin{document}

\title {Finite N\"orlund summation methods}

\date{}

\author[P.L. Robinson]{P.L. Robinson}

\address{Department of Mathematics \\ University of Florida \\ Gainesville FL 32611  USA }

\email[]{paulr@ufl.edu}

\subjclass{} \keywords{}

\begin{abstract}

We draw attention to simplifications in the theory of a N\"orlund summation method $(N, p)$ that arise when the series $\sum_{n \geqslant 0} p_n$ is convergent.

\end{abstract}

\maketitle

\medbreak 

Let the real sequence $(p_n : n \geqslant 0)$ satisfy $p_0 > 0$ and $p_n \geqslant 0$ for $n > 0$. Let $P_n = p_0 + \cdots + p_n$ when $n \geqslant 0$ and let 
$$P = \lim_{ n \ra \infty} P_n = \sum_{n \geqslant 0} p_n \leqslant \infty.$$
The corresponding N\"orlund process $(N, p)$ associates to each sequence $s = (s_n : n \geqslant 0)$ the sequence $N^{(p)}(s) = t = (t_m : m \geqslant 0)$ given by 
$$t_m = N^{(p)}_m(s) = \frac{p_0 s_m + \cdots + p_m s_0}{p_0 + \cdots + p_m} = \frac{1}{P_m} \sum_{n = 0}^m p_{m - n} s_n.$$
Precisely when the sequence $(t_m: m \geqslant 0)$ converges to $\sigma$ in the ordinary sense, we say that the sequence $(s_n : n \geqslant 0)$ is $(N, p)$-convergent to $\sigma$ and write 
$$s_n \xrightarrow{(N, p)} \sigma$$
or, for typographical reasons, $s_n \ra \sigma \; (N, p)$; in case $(s_n : n \geqslant 0)$ is the sequence of partial sums of the series $\sum_{n \geqslant 0} a_n$ we say that this series is $(N, p)$-summable with sum $\sigma$ and write 
$$\sum_{n \geqslant 0} a_n = \sigma \; (N, p).$$
The N\"orlund process $(N, p)$ is one of many summation methods for assigning sums to ordinarily divergent series. As our basic reference, we take the classic treatise {\it `Divergent Series}' by Hardy [1]: general theorems regarding summation methods are covered in Chapter III;  N\"orlund methods themselves open Chapter IV. 

\medbreak 

Our primary concern is to highlight certain simplifications that take place in the theory of N\"orlund methods when attention is limited to those that are finite. Here, we say that the N\"orlund method $(N, p)$ is {\it finite} precisely when $P < \infty$; that is, when the series $\sum_{n \geqslant 0} p_n$ is convergent. With all due respect, rather than refer to $(N, p)$ as a N\"orlund process or a N\"orlund method, we may refer to it briefly as a `N\"orlund'. 

\medbreak 

The first simplification has to do with regularity. Quite generally, a summation method is said to be {\it regular} precisely when it assigns to each ordinarily convergent series its ordinary sum. Theorem 16 in [1] establishes that the general N\"orlund $(N, p)$ is regular precisely when the sequence of quotients $(p_n/P_n : n \geqslant 0)$ converges to zero. For a finite N\"orlund, this simplifies as follows. 

\medbreak 

\begin{theorem} \label{reg} 
Each finite N\"orlund method is regular. 
\end{theorem}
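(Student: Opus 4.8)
The plan is to reduce the statement to the regularity criterion already quoted from [1] (Theorem 16): a Nörlund $(N,p)$ is regular precisely when $p_n/P_n \to 0$. So it suffices to verify this quotient condition under the standing hypothesis $P < \infty$.

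To that end I would record two elementary consequences of $P = \sum_{n \geqslant 0} p_n < \infty$. First, the terms of a convergent series tend to zero, so $p_n \to 0$. Second, since $p_0 > 0$ and $p_n \geqslant 0$ for all $n$, the partial sums satisfy $P_n \geqslant p_0 > 0$ for every $n$ while $P_n \to P$; in particular $1/P_n \to 1/P$, which is finite and strictly positive. Combining the two, $p_n/P_n \to 0 \cdot (1/P) = 0$, and Theorem 16 yields regularity at once.

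For the sake of self-containedness it is also worth giving the direct argument. If $s_n \to s$, then since $\tfrac{1}{P_m}\sum_{n=0}^m p_{m-n} = P_m/P_m = 1$ we may write $t_m - s = \tfrac{1}{P_m}\sum_{n=0}^m p_{m-n}(s_n - s)$. Given $\varepsilon > 0$, fix $N$ with $|s_n - s| < \varepsilon$ for $n > N$ and split the sum at $N$: the tail $\tfrac{1}{P_m}\sum_{n=N+1}^m p_{m-n}|s_n - s|$ is at most $\tfrac{\varepsilon}{P_m}\sum_{k=0}^{m-N-1} p_k = \varepsilon P_{m-N-1}/P_m \leqslant \varepsilon$, while the head $\tfrac{1}{P_m}\sum_{n=0}^N p_{m-n}|s_n - s|$ involves only the fixed, bounded values $|s_0 - s|,\dots,|s_N - s|$ multiplied by the coefficients $p_m, p_{m-1},\dots,p_{m-N}$, each of which tends to $0$ as $m \to \infty$ because $p_k \to 0$, all divided by $P_m \geqslant p_0$; hence the head tends to $0$. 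Therefore $\limsup_m |t_m - s| \leqslant \varepsilon$, and letting $\varepsilon \downarrow 0$ gives $t_m \to s$.

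I do not anticipate a genuine obstacle here: the whole content is that finiteness of $P$ is exactly what forces $p_n \to 0$ while simultaneously keeping the normalizing denominators $P_m$ bounded away from $0$, so the quotient criterion becomes automatic. The only mild care required is in the direct argument, in handling the finitely many ``boundary'' coefficients $p_{m-n}$ with $0 \leqslant n \leqslant N$ as $m$ grows; but these vanish in the limit precisely because $p_k \to 0$.
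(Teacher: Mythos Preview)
Your first argument --- that finiteness of $\sum p_n$ forces $p_n \to 0$ and $P_n \to P > 0$, whence $p_n/P_n \to 0$, so Theorem~16 of [1] applies --- is exactly the paper's proof. The self-contained direct $\varepsilon$-splitting argument you append is correct but goes beyond what the paper provides and is not needed once Theorem~16 is available.
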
 

\begin{proof} 
Let the series $\sum_{n \geqslant 0} p_n$ be convergent: its terms $p_n$ converge to $0$ and its partial sums $P_n$ converge to $P > 0$; consequently, $p_n/P_n \rightarrow 0$. 
\end{proof} 

More interesting simplifications have to do with comparisons between summation methods. We say that one summation method {\it includes} a second summation method precisely when each series that is summable by the second is summable by the first (to the same sum); we say that two summation methods are {\it equivalent} exactly when each includes the other. Precise necessary and sufficient conditions for inclusion and equivalence between regular N\"orlunds were determined by Marcel Riesz; an extract from his letter to Hardy announcing these results was published as [2]. For convenience, throughout the following discussion we shall consistently refer to [1], where these results appear as Theorem 19 and Theorem 21.  

\medbreak 

To prepare for the Riesz theorems and their simplifications, let $(N, p)$ and $(N, q)$ be regular N\"orlunds, with $p_0 + \cdots + p_n = P_n \ra P$ as $n \ra \infty$ and $q_0 + \cdots + q_n = Q_n \ra Q$ as $n \ra \infty$. Associated to this N\"orlund pair are comparison sequences $(k_n : n \geqslant 0)$ and $(l_n : n \geqslant 0)$ uniquely determined by recursively solving the convolution systems 
$$q_n = k_0 p_n + \cdots + k_n p_0$$
$$p_n = l_0 q_n + \cdots + l_n q_0$$
since $p_0$ and $q_0$ are nonzero. By summation, it follows that also 
$$Q_n = k_0 P_n + \cdots + k_n P_0$$
$$P_n = l_0 Q_n + \cdots + l_n Q_0.$$
The N\"orlund coefficients may be assembled to define power series 
$$p(x) = \sum_{n \geqslant 0} p_n x^n$$
$$q(x) = \sum_{n \geqslant 0} q_n x^n$$
convergent for $|x| < 1$ and nonzero when $|x|$ is small. The comparison coefficients likewise assemble to define mutually reciprocal power series 
$$k(x) = \sum_{n \geqslant 0} k_n x^n$$
$$l(x) = \sum_{n \geqslant 0} l_n x^n$$
satisfying 
$$q(x) = k(x) p(x)$$
$$p(x) = l(x) q(x)$$
and converging when $|x|$ is small. 

\medbreak 

In these terms, Theorem 19 in [1] establishes that $(N, q)$ includes $(N, p)$ if and only if each of the following conditions is satisfied: \par 
(i) there exists $H > 0$ such that for each $n \geqslant 0$ 
$$|k_0| P_n + \cdots + |k_n| P_0 \leqslant H Q_n;$$\par 
(ii) the sequence of quotients $(k_n/Q_n: n \geqslant 0)$ converges to zero. \\ 

\medbreak 

In order to exhibit the simplification that comes from assuming that $(N, p)$ and $(N, q)$ are both finite, we introduce two notational conveniences. First, to indicate that $(N, q)$ includes $(N, p)$ we shall write $(N, p) \rightsquigarrow (N, q)$; this symbolizes the requirement that for any sequence, $(N, p)$-convergence implies $(N, q)$-convergence. Next, we define 
$$[q:p] = \sum_{n \geqslant 0} |k_n| \leqslant \infty$$
so that likewise 
$$[p:q] = \sum_{n \geqslant 0} |l_n|.$$

\medbreak 

Theorem 19 in [1] now simplifies as follows. 

\begin{theorem} \label{inc}
If $(N, p)$ and $(N, q)$ are finite N\"orlund methods, then 
$$(N, p) \rightsquigarrow (N, q) \Leftrightarrow [q:p] < \infty.$$
\end{theorem}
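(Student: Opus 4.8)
The plan is to read the equivalence off Theorem 19 of [1], exploiting the single structural feature that distinguishes the finite case: when $P < \infty$ the partial sums $P_n$ (and likewise the $Q_n$) are trapped between two positive constants. So the first step is to record that, since $p_n \geqslant 0$ and $q_n \geqslant 0$, the sequences $(P_n)$ and $(Q_n)$ are nondecreasing, whence
$$0 < p_0 = P_0 \leqslant P_n \leqslant P < \infty, \qquad 0 < q_0 = Q_0 \leqslant Q_n \leqslant Q < \infty$$
for every $n$. Next I would note that each of $(N,p)$ and $(N,q)$ is regular by Theorem \ref{reg}, so Theorem 19 of [1] is available for this pair; consequently it suffices to show that its conditions (i) and (ii), taken together, are equivalent to $[q:p] = \sum_{n \geqslant 0} |k_n| < \infty$.

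For the forward implication, suppose $(N,p) \rightsquigarrow (N,q)$, so in particular (i) holds with some constant $H > 0$. In the weighted sum $|k_0| P_n + \cdots + |k_n| P_0 = \sum_{j=0}^n |k_j| P_{n-j}$ I would bound each factor $P_{n-j}$ below by $P_0 = p_0$, obtaining $p_0 \sum_{j=0}^n |k_j| \leqslant H Q_n \leqslant H Q$; letting $n \to \infty$ gives $[q:p] \leqslant H Q / p_0 < \infty$. (Condition (ii) plays no role in this direction.)

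For the converse, assume $[q:p] < \infty$; I would verify (i) and (ii) directly. For (i), bound each $P_{n-j}$ above by $P$ to get $|k_0| P_n + \cdots + |k_n| P_0 \leqslant P \sum_{j=0}^n |k_j| \leqslant P\,[q:p] = \bigl(P\,[q:p]/q_0\bigr) q_0 \leqslant \bigl(P\,[q:p]/q_0\bigr) Q_n$, so (i) holds with $H = P\,[q:p]/q_0$. For (ii), convergence of $\sum |k_n|$ forces $k_n \to 0$, and then $|k_n|/Q_n \leqslant |k_n|/q_0 \to 0$, which is (ii). Theorem 19 then yields $(N,p) \rightsquigarrow (N,q)$.

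The whole argument is bookkeeping once the two-sided bounds on $(P_n)$ and $(Q_n)$ are in hand, so I do not anticipate a genuine obstacle; the only thing to be careful about is invoking those bounds on the correct side of each inequality — using $P_n \geqslant p_0$ and $Q_n \leqslant Q$ in the forward direction, and $P_n \leqslant P$ and $Q_n \geqslant q_0$ in the converse. It is perhaps worth remarking in passing that in the finite setting condition (ii) of Theorem 19 is subsumed by condition (i), and that both conditions reduce to absolute summability of the comparison coefficients, which is precisely what makes the statement collapse to the clean form above.
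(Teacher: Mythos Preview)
Your proof is correct and follows essentially the same route as the paper: both directions are read off Theorem~19 of [1] via two-sided bounds on $P_n$ and $Q_n$. The only cosmetic difference is in the reverse direction, where the paper bounds $P_n/Q_n$ by a constant $J$ obtained from the convergence $P_n/Q_n \to P/Q$, while you use the explicit bound $P_n/Q_n \leqslant P/q_0$; your choice is arguably cleaner, but the underlying idea is identical.
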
 

\begin{proof} 
In the forward direction, let $(N, p) \rightsquigarrow (N, q)$: [1] Theorem 19 yields 
$$|k_0| P_n + \cdots + |k_n| P_0 \leqslant H Q_n$$
whence
$$ |k_0| + \cdots + |k_n| \leqslant H Q /P_0$$
for each $n \geqslant 0$; so  
$$[q : p] = \sum_{n \geqslant 0} |k_n| \leqslant H Q / P_0 < \infty.$$
In the reverse direction, let $[q : p] < \infty$. From $P_n/Q_n \ra P/Q$ we deduce that $P_n / Q_n$ is bounded; say $P_n /Q_n \leqslant J$. Now 
$$|k_n| P_0 + \cdots + |k_0| P_n \leqslant (|k_n| + \cdots + |k_0|) P_n \leqslant \sum_{\nu = 0}^{n} |k_{\nu}| J Q_n \leqslant [q:p] J Q_n.$$
The conditions of [1] Theorem 19 are thus satisfied: the first with $H = [q:p] J$; the second because $k_n \ra 0$. 
\end{proof} 

\medbreak 

Theorem 21 in [1] appears with no essential change to its conclusion. 

\medbreak 

\begin{theorem} \label{equ}
The finite N\"orlund methods $(N, p)$ and $(N, q)$ are equivalent if and only if both $[q:p] < \infty$ and $[p:q] < \infty.$
\end{theorem}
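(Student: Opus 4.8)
The plan is to obtain Theorem \ref{equ} as an immediate corollary of Theorem \ref{inc}, with no new analysis required. First I would unwind the definition of equivalence: by definition the finite N\"orlunds $(N, p)$ and $(N, q)$ are equivalent precisely when each includes the other, that is, precisely when both $(N, p) \rightsquigarrow (N, q)$ and $(N, q) \rightsquigarrow (N, p)$.

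Next I would apply Theorem \ref{inc} twice. Applied as stated, it gives $(N, p) \rightsquigarrow (N, q) \Leftrightarrow [q:p] < \infty$. Applied with the roles of $(N, p)$ and $(N, q)$ interchanged, it gives $(N, q) \rightsquigarrow (N, p) \Leftrightarrow [p:q] < \infty$. This second invocation is legitimate because the finiteness hypothesis $P < \infty$, $Q < \infty$ is symmetric in $p$ and $q$, and because the comparison system $p_n = l_0 q_n + \cdots + l_n q_0$ defining the coefficients $l_n$ is exactly the system defining the $k_n$ once $p$ and $q$ are swapped; hence $[p:q] = \sum_{n \geqslant 0} |l_n|$ plays for the swapped pair the role that $[q:p]$ plays for the original pair. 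Conjoining the two equivalences yields exactly the asserted biconditional.

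The hard part is essentially nonexistent: the only point deserving a sentence of care is the symmetry remark just made — confirming that interchanging $p \leftrightarrow q$ in Theorem \ref{inc} does produce the statement about $[p:q]$ and $(N, q) \rightsquigarrow (N, p)$ — and this is immediate from the way $k(x)$ and $l(x)$ were introduced as mutually reciprocal power series, equivalently as solutions of mutually reverse convolution systems.
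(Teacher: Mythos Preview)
Your proposal is correct and matches the paper's approach exactly: the paper's own proof simply observes that in the finite setting the result follows immediately from Theorem~\ref{inc}, which is precisely what you do by applying Theorem~\ref{inc} once in each direction. Your explicit symmetry remark about swapping $p \leftrightarrow q$ and $k \leftrightarrow l$ is a reasonable elaboration of what the paper leaves implicit.
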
 

\begin{proof} 
Aside from its referring only to finite N\"orlunds, the {\it statement} of this second Riesz theorem has undergone no change of substance; in the finite setting, its {\it proof} has simplified to the point that it now follows immediately from the first (Theorem \ref{inc}). 
\end{proof} 

\medbreak 

We remark that when regular N\"orlunds are considered, the condition $[q:p] < \infty$ is neither necessary nor sufficient for $(N, q)$ to include $(N, p)$. This is suggested by the inextricable entanglement of the conditions $[q:p] < \infty$ and $[p:q] < \infty$ in the proof of Theorem 21 in [1] and is manifestly clear from the following example. 

\medbreak 

{\bf Example 0}. 
Let $u_0 = 1$ and let $u_n = 0$ whenever $n > 0$; application of the method $(N, u)$ leaves each sequence unchanged, and $(N, u)$-convergence is ordinary convergence. Let $c_n = 1$ for every $n \geqslant 0$; application of the method $(N, c)$ converts any sequence to its sequence of arithmetic means, and $(N, c)$-convergence is Ces\`aro convergence. Here, $u(x) = 1$ and $c(x) = \sum_{n \geqslant 0} x^n$ so that 
$c(x)/u(x) = \sum_{n \geqslant 0} x^n$ and $u(x)/c(x) = 1 - x.$ On the one hand, $[c:u] = \infty$ but the inclusion $(N, u) \rightsquigarrow (N, c)$ holds, as it amounts to regularity of the Ces\`aro method $(N, c)$. On the other hand, $[u:c] = 2 < \infty$ but the inclusion $(N, c) \rightsquigarrow (N, u)$ fails, since the ordinarily divergent sequence $(1, 0, 1, 0, \dots)$ is Cesa\`ro convergent (to $1/2$). 

\medbreak 

Further simplifications regarding finite N\"orlund methods have to do with triviality. Here, we say that a summation method is {\it trivial} precisely when it is equivalent to ordinary summation; that is, precisely when it is equivalent to $(N, u)$ in the notation of Example 0. 

\medbreak 

Hardy [1] quotes as Theorem 22 a result of Kaluza and Szeg\"o to the following effect: let $p_0 = 1$, let $p_n > 0$ when $n > 0$, and let the power series $\sum_{n \geqslant 0} p_n x^n$ converge when $|x| < 1$; if the coefficients satisfy the condition $p_{n + 1} p_{n - 1} \geqslant p_n^2$ whenever $n > 0$ then 
$$\frac{1}{p(x)} = \sum_{n \geqslant 0} k_n x^n$$
where $k_0 = 1$, where $k_n \leqslant 0$ for $n > 0$, and where $\sum_{n > 0} k_n \geqslant -1$. Hardy puts this result to use in proving another inclusion theorem: namely, his [1] Theorem 23 establishes that if the regular N\"orlund $(N, p)$ has the foregoing properties and if $(N, q)$ is a regular N\"orlund with strictly positive coefficients such that $p_{n + 1}/p_n \leqslant q_{n + 1}/q_n$ eventually, then $(N, p) \rightsquigarrow (N, q)$. Now, Hardy is here interested primarily in cases in which $P_n$ tends slowly to infinity; with good reason. In fact, if the N\"orlund $(N, p)$ presently under consideration were finite then it would be trivial: indeed, $k_0 = 1$ and the inequality $\sum_{n > 0} k_n \geqslant - 1$ above imply that 
$$[u : p] = \sum_{n \geqslant 0} |k_n| \leqslant 2$$
which with $[p:u] = P < \infty$ renders $(N, p)$ equivalent to $(N, u)$ of Example 0 according to Theorem \ref{equ}. Consequently, [1] Theorem 23 trivializes in such a case, for then $(N, p) \rightsquigarrow (N, q)$ when $(N, q)$ is any regular N\"orlund process whatever, by the definition of regularity. 

\medbreak 

We conclude our account with some further examples. In each, the symbol $p$ plays various r\^oles, distinguishable by context. 

\medbreak 

{\bf Example 1}. 
Fix a positive real number $p$ and let $p_n = p^n / n!$ whenever $n \geqslant 0$. The `Poisson' N\"orlund $(N, p)$ is always trivial: $p(x) = e^{p x}$ and $1/p(x) = e^{-px}$ so that $[p:u] = e^p = [u:p]$. 

\medbreak 

{\bf Example 2}. 
Again fix $p > 0$ and let $p_n = p^n$ whenever $n \geqslant 0$. The `geometric' N\"orlund $(N, p)$ is trivial when $p < 1$: explicitly, 
$$p(x) = \sum_{ n \geqslant 0} p^n x^n = (1 - p x)^{-1}$$
so that $[u:p] = 1 + p$ and of course $[p:u] = (1 - p)^{-1}$. When $p = 1$ the `geometric' N\"orlund reduces to the Ces\`aro method. When $p > 1$, the `geometric' N\"orlund $(N, p)$ is no longer regular: instead, it belongs to the class $\mathfrak{T}_c^*$ defined in [1] Chapter III; thus, it transforms each bounded sequence to a convergent one. 

\medbreak 

{\bf Example 3}. 
More generally, fix $p > 0$ but now fix also a positive integer $k$ and let 
$$p_n = {{n + k - 1} \choose {k - 1}} p^n$$
so that if $| p x | < 1$ then 
$$p(x) = \sum_{n \geqslant 0}{{n + k - 1} \choose {k - 1}}(p x)^n = (1 - p x)^{- k}.$$
Here, if $p < 1$ then this `negative binomial' N\"orlund $(N, p)$ is again trivial, for $[p:u] = (1 - p)^{-k}$ and $[u:p] = (1 + p)^k$. If $p = 1$ then $(N, p)$ is the standard Ces\`aro summation method $(C, k)$.

\medbreak 

{\bf Example 4}. 
Fix $s$ and let $p_n = (n + 1)^{-s}$. If $s > 1$ then $(N, p)$ is finite with $[p:u] = \zeta(s)$ and is indeed trivial, because if $s > 0$ then $p_{n + 1} p_{n - 1} > p_n^2$ and  [1] Theorem 22 applies. The case $s = 1$ effectively reproduces the `harmonic' summation method of Riesz [2]. 

\medbreak 

{\bf Example 5}. 
The N\"orlund $(N, p)$ will of course be finite when its coefficients vanish beyond some index, whereupon the power series $p(x)$ reduces to a polynomial in $x$. Here, the Enestr\"om-Kakeya theorem addresses a special case: it guarantees that if $p_0 > p_1 > \cdots > p_N > 0$ then the polynomial $p(x) = p_0 + p_1 x + \cdots + p_N x^N$ has no zeros in the closed unit disc, whence the power series expansion
$$\frac{1}{p(x)} = \sum_{n \geqslant 0} k_n x^n$$
of its reciprocal converges in a neighbourhood of the closed unit disc; in particular, $\sum_{n \geqslant 0} k_n$ is absolutely convergent and $[u:p] < \infty$. In short, a N\"orlund $(N, p)$ for which $p_0 > p_1 > \cdots > p_N > 0$ and $p_n = 0$ whenever $n > N$ is necessarily trivial. To take a specific example, let $p_0 = 1$, let $p_1 = p > 0$, and let $p_n = 0$ whenever $n > 1$: when $p < 1$ this N\"orlund is trivial; when $p = 1$ it reduces to the (nontrivial) Hutton summation method $(Hu, 1)$ mentioned in the notes to Chapter 1 of [1]. 

\medbreak

\bigbreak

\begin{center} 
{\small R}{\footnotesize EFERENCES}
\end{center} 

\medbreak 

[1] G.H. Hardy, {\it Divergent Series}, Clarendon Press, Oxford (1949). 

\medbreak 

[2] M. Riesz, {\it Sur l'\'equivalence de certaines m\'ethodes de sommation}, Proceedings of the London Mathematical Society (2) {\bf 22} 412-419 (1924). 

\medbreak

\end{document}